\author[I.~Kapovich]{Ilya Kapovich}
\address{\tt Department of Mathematics, University of Illinois at
  Urbana-Champaign, 1409 West Green Street, Urbana, IL 61801, USA
  \newline http://www.math.uiuc.edu/\~{}kapovich/} \email{\tt
  kapovich@math.uiuc.edu}
\title[Algorithmic detectability of iwip automorphisms]{Algorithmic detectability of iwip automorphisms}
\newtheorem{theor}{Theorem}
\newtheorem{thm}{Theorem}[section] \newtheorem{lem}[thm]{Lemma}
\newtheorem{cor}[thm]{Corollary} 
\newtheorem{prop}[thm]{Proposition} \theoremstyle{definition}
\newtheorem{defn}[thm]{Definition}
 \newtheorem{rem}[thm]{Remark}
\def\strutdepth{\dp\strutbox}
\def \ss{\strut\vadjust{\kern-\strutdepth \sss}}
\def \sss{\vtop to \strutdepth{
\baselineskip\strutdepth\vss\llap{$\diamondsuit\;\;$}\null}}
\def\strutdepth{\dp\strutbox}
\def \sst{\strut\vadjust{\kern-\strutdepth \ssss}}
\def \ssss{\vtop to \strutdepth{
\baselineskip\strutdepth\vss\llap{$\spadesuit\;\;$}\null}}
\def\strutdepth{\dp\strutbox}
\def \ssh{\strut\vadjust{\kern-\strutdepth \sssh}}
\def \sssh{\vtop to \strutdepth{
\baselineskip\strutdepth\vss\llap{$\heartsuit\;\;$}\null}}
\def\epsilon{\varepsilon}
\def\phi{\varphi}
\newcommand{\Out}{\mbox{Out}}
\newcommand{\Aut}{\mbox{Aut}}
\begin{document}

\begin{abstract}
We produce an algorithm that, given $\phi\in \Out(F_N)$, where $N\ge 2$, decides wether or not $\phi$ is an iwip ("fully irreducible") automorphism. 
\end{abstract}

\thanks{The author was partially supported by the NSF
  grant DMS-0904200}

\subjclass[2010]{Primary 20F65, Secondary 57M, 37B, 37D}

\maketitle

\section{Introduction}

The notion of a pseudo-Anosov homeomorphism of a compact surface plays a fundamental role in low-dimensional topology and the study of mapping class groups. In the context of $\Out(F_N)$ the concept of being pseudo-Anosov has several (non-equivalent) analogs.

The first is the notion of an "atoroidal" automorphism. An element $\phi\in \Out(F_N)$ is called \emph{atoroidal} if there do not exist $m\ge 1$, $h\in F_N, h\ne 1$ such that $\phi^m$ preserves the conjugacy class $[h]$ of $h$ in $F_N$. A key result of Brinkmann~\cite{Br}, utilizing the Bestvina-Feighn Combination Theorem~\cite{BF92}, says that $\phi\in \Out(F_N)$ is atoroidal if and only if the mapping torus group of some (equivalently, any) representative $\Phi\in \Aut(F_N)$ of $\phi$ is word-hyperbolic. 
Another, more important, free group analog of being pseudo-Anosov is the notion of a "fully irreducible" or "iwip" automorphism. An element $\phi\in \Out(F_N)$ is called \emph{reducible} if there exists a free product decomposition $F_N=A_1\ast \dots \ast A_k\ast C$ with $k\ge 1$, $A_i\ne 1$ and $A_i\ne F_N$ such that $\phi$ permutes the conjugacy classes $[A_1], \dots, [A_k]$. An element $\phi\in \Out(F_N)$ is \emph{irreducible} if it is not reducible. An element $\phi\in \Out(F_N)$ is \emph{fully irreducible} or \emph{iwip} (which stands for "irreducible with irreducible powers") if $\phi^m$ is irreducible for all integers $m\ge 1$ (equivalently, for all nonzero integers $m$). Thus $\phi$ is an iwip if and only if there do not exist a proper free factor $A$ of $F_N$ and $m\ge 1$ such that $\phi^m([A])=[A]$. The notion of an iwip automorphism plays a key role in the study of geometry and dynamics of $\Out(F_N)$ and of the Culler-Vogtmann Outer space (see, for example \cite{LL,Vog,BFH97,Gui1,BG,CP,CH12,KL4}, etc).

If $S$ is a connected compact surface, there are well-known algorithms (e.g. see \cite{BH95}) to decide whether or not an element $g\in Mod(S)$ of the mapping class group of $S$ is pseudo-Anosov. Similarly, because of the result of Brinkmann mentioned above, it is easy (at least in theory) to decide algorithmically whether an element $\phi\in \Out(F_N)$ is atoroidal. Namely, we pick a representative  $\Phi\in \Aut(F_N)$ of $\phi$, form the mapping torus group $G=F_N\rtimes_\Phi \mathbb Z$ of $\Phi$ and start, in parallel, checking if $G$ is hyperbolic (e.g. using the partial algorithm of Papasoglu~\cite{Pa} for detecting hyperbolicity) while at the same time looking for periodic conjugacy classes of nontrivial elements of $F_N$. Eventually exactly one of these procedures will terminate and we will know whether or not $\phi$ is atoroidal. A similar algorithm can be used to decide, for a closed hyperbolic surface $S$, if an element $g\in Mod(S)$ is pseudo-Anosov.

By contrast, there is no obvious approach to algorithmically deciding whether an element $\phi\in \Out(F_N)$ is an iwip. 
In this note we provide such an algorithm:

\begin{theor}\label{thm:A}
There exists an algorithm that, given $N\ge 2$ and $\phi\in \Out(F_N)$ decides whether or not $\phi$ is an iwip.
\end{theor}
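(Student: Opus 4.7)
The plan is to exhibit two partial algorithms that run in parallel, so that for every input $\phi$ exactly one of them halts; the output of whichever halts first decides whether $\phi$ is iwip. This mirrors the atoroidal-detection strategy sketched just above in the introduction.

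The first partial algorithm, which halts exactly when $\phi$ is not iwip, is a direct enumeration. I would effectively enumerate all pairs $(m,[A])$ with $m\ge 1$ and $[A]$ the conjugacy class of a nontrivial proper free factor of $\FN$: enumerate finite subsets $S\subset\FN$; use Stallings foldings to compute the subgroup $\langle S\rangle$ and its rank; use Whitehead's algorithm to decide whether $\langle S\rangle$ is a proper nontrivial free factor of $\FN$; and, again by Whitehead's algorithm (applied to a tuple of conjugacy classes forming a free basis of a given free factor), decide whether $\phi^m([A])=[A]$. If such a pair is found, halt with output ``not iwip.'' This procedure terminates precisely when $\phi$ is not iwip.

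The second partial algorithm, which must halt exactly when $\phi$ is iwip, is the main obstacle: it must produce a verifiable finite certificate of the \emph{a priori} infinitary condition that $\phi^m$ is irreducible for every $m\ge 1$. My plan would be to reduce iwip-detection to a single effective check by using a bound coming from the combinatorics of free factor systems: if $\phi^k$ has an invariant proper free factor, there is a canonical (say, maximal under refinement) $\phi^k$-invariant proper free factor system $\mathcal F$, whose number of components is bounded by $N$; by canonicity $\phi$ must permute $\mathcal F$, so the permutation has order dividing $N!$ and $\phi^{N!}$ preserves each component. Consequently $\phi$ is iwip if and only if $\phi^{N!}$ is irreducible, and irreducibility of a given element of $\Out(\FN)$ is decidable by the Bestvina-Handel relative train-track algorithm: its output is either a filtration exhibiting a reducing free factor system, or an absolute irreducible Perron-Frobenius train-track representative which certifies irreducibility.

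The hard part is justifying the existence, canonicity, and $\phi$-invariance of the free factor system $\mathcal F$ used to extract the bound $N!$; this requires appealing to the structural theory of invariant free factor systems (Bestvina-Feighn-Handel). Should that canonicity argument fail to give a clean a priori bound, the fallback is to keep the second procedure as a genuine semi-algorithm: enumerate positive certificates produced from train-track data together with attracting/repelling lamination and current information (in the spirit of Kapovich-Lustig), halting as soon as one verifies iwip-ness. Either way, running the two procedures in parallel yields the desired decision algorithm.
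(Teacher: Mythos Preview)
Your first partial algorithm (enumerating periodic proper free factors) is fine and matches part of the paper's own strategy. The problem is entirely in the second procedure.

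The proposed bound $N!$ does not work, and the paper itself supplies the obstruction in Remark~\ref{rem:crit}: take $\phi\in\Out(F_4)$ induced by a pseudo-Anosov on the $5$-punctured sphere that cyclically permutes the five punctures. The five peripheral curves give five rank-one free factors $[\langle c_1\rangle],\dots,[\langle c_5\rangle]$, each with $\phi$-period exactly $5$, and $5\nmid 4!=24$. Your canonicity step breaks down here explicitly: any four of the $c_i$ form a free basis of $F_4$, so there are \emph{five} maximal $\phi^5$-invariant proper free factor systems (the five ways of omitting one $c_i$), and $\phi$ permutes these five systems in a $5$-cycle. No single one is canonical, so nothing forces $\phi$ to preserve it setwise. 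The reduction ``$\phi$ is iwip $\Longleftrightarrow$ $\phi^{N!}$ is irreducible'' is therefore unsupported, and in this example appears to be outright false.

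Your fallback is the right instinct, but it is precisely the substance of the paper, and the details are not optional. The paper's finite certificate of iwip-ness for an \emph{atoroidal} $\phi$ is a \emph{clean} train-track representative (Proposition~\ref{prop:crit}): some power has positive transition matrix and every Whitehead graph is connected. Crucially, this criterion is false without the atoroidal hypothesis --- the same $5$-punctured-sphere example admits a clean train track yet is not iwip. So the paper first decides whether $\phi$ is atoroidal (running Papasoglu's hyperbolicity detector on the mapping torus against a search for periodic conjugacy classes), and then bifurcates: in the atoroidal case the clean-train-track criterion decides everything; in the non-atoroidal case an entirely different certificate is used, namely a realization of $\phi$ as a pseudo-Anosov on a compact surface with a \emph{single} boundary component, enumerated in parallel with the search for a periodic free factor. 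Your fallback does not anticipate this split, and without it no single train-track or lamination certificate covers the geometric iwips.
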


A key step in the argument is an "if and only if" criterion of iwipness for atoroidal elements of $\Out(F_N)$ in terms of Whitehead graphs of train-track representatives of $\phi$, see Proposition~\ref{prop:crit} below. Proposition~\ref{prop:crit} is similar to and inspired by Lemma~9.9 in a recent paper of Pfaff~\cite{Pf}; see also Proposition~5.1 in a paper of J\"ager and Lustig~\cite{JL} for a related criterion of iwipness.
Compared to the proof of Lemma~9.9 in \cite{Pf}, our proof of Proposition~\ref{prop:crit} is more elementary  and does not involve any relative train-track technology or any machinery from the Bestvina-Feign-Handel work~\cite{BFH00} on the Tits Alternative for $\Out(F_N)$.
However, we do utilize the notion of a "stable lamination" developed by Bestvina-Feign-Handel in \cite{BFH97} for iwip elements of $\Out(F_N)$.

To the best of our knowledge, the statement of Theorem~\ref{thm:A} does not exist in the literature, although it is most likely that this result is known to some experts in the field.
Since the notion of an iwip plays such a fundamental role in the study of $\Out(F_N)$, we think it is useful to put a proof of Theorem~\ref{thm:A} in writing.

In a subsequent paper of the author with Dowdall and Leininger~\cite{DKL}, the conclusion of Proposition~\ref{prop:crit}  was improved by showing that the assumption in  Proposition~\ref{prop:crit} that there exists a positive power $f^k$ of $f$ with $A(f)>0$ may be replaced by the assumption that $A(f)$ be irreducible. See Proposition~\ref{prop:DKL} below for a precise statement. This fact, together with Proposition~\ref{prop:crit}, was used in \cite{DKL} to show that for an atoroidal $\phi\in \Out(F_N)$ being irreducible is equivalent to being an iwip; see Corollary~\ref{cor:DKL} below.

I am grateful to Martin Lustig for useful discussions and to Matt Clay for pointing out and correcting an error in the enumeration procedure in Case~2 of Theorem~\ref{thm:main} in an earlier version of this paper. I am also grateful to Chris Leininger for suggesting a slight simplification of the proof of Proposition~\ref{prop:BFH97}. Finally, I wish to thank the referee for the unusually positive tone of the report.

\section{Train-track and graph terminology}

For a free group $F_N$ (where $N\ge 2$) we fix an identification $F_N=\pi_1(R_N)$, where $R_N$ is the \emph{$N$-rose}, that is, a wedge of $N$ circles. 

We will only briefly recall the basic definitions related to train-tracks for free group automorphisms. We refer the reader to \cite{BH92,DV,Bo,BFH00,BG} for detailed background information.

\subsection{Graphs and graph-maps}

By a \emph{graph} we mean a 1-dimensional cell-complex. For a graph $\Gamma$ we refer to $0$-cells of $\Gamma$ as \emph{vertices} and to open $1$-cels of $\Gamma$ as \emph{topological edges}. We denote the set of vertices of $\Gamma$ by $V\Gamma$ and the set of topological edges of $\Gamma$ by $E_{top}\Gamma$. Each topological edge of $\Gamma$ is homeomorphic to $(0,1)$ and thus admits exactly two orientations. A topological edge with a choice of an orientation is called an \emph{oriented edge} or just \emph{edge} of $\Gamma$. We denote the set of oriented edges of $\Gamma$ by $E\Gamma$. For an oriented edge $e$ of $\Gamma$ we denote by $o(e)$ the initial vertex of $e$ and by $t(e)$ the terminal vertex of $e$; we also denote by $e^{-1}$ the edge $e$ with the opposite orientation. Thus $o(e^{-1})=t(e), t(e^{-1})=o(e)$ and $(e^{-1})^{-1}=e$.

If $\Gamma$ is a graph, a \emph{turn} in $\Gamma$ is an unordered pair $e,e'$ of oriented edges of $\Gamma$ such that $o(e)=o(e')$. A turn $e,e'$ is \emph{degenerate} if $e=e'$ and \emph{non-degenerate} if $e\ne e'$. 

An \emph{edge-path} in a graph $\Gamma$ is a sequence $\gamma=e_1,\dots, e_n$ of $n\ge 1$ oriented edges such that $t(e_i)=o(e_{i+1})$ for all $1\le i<n$. We say that $n$ is the \emph{simplicial length} of $\gamma$ and denote $|\gamma|=n$.  We put $o(\gamma):=o(e_1)$, $t(\gamma):=t(e_n)$ and $\gamma^{-1}:=e_n^{-1},\dots, e_1^{-1}$.
We also view a vertex $v$ of $\Gamma$ as an edge-path $\gamma$ of simplicial length $0$ with $o(\gamma)=t(\gamma)=v$.

If $\gamma=e_1,\dots, e_n$ is an edge-path in $\Gamma$ and $e,e'$ is a turn in $\Gamma$, we say that this turn is \emph{contained in $\gamma$} if there exists $1\le i<n$ such that $e_i=e^{-1},e_{i+1}=e'$ or $e_i=(e')^{-1},e_{i+1}=e$.

An edge-path $\gamma=e_1,\dots, e_n$ is \emph{tight} or \emph{reduced} if there does not exist $i$ such that $e_{i+1}=e_i^{-1}$, that is, if every turn contained in $\gamma$ is non-degenerate. A closed edge-path $\gamma=e_1,\dots, e_n$ is \emph{cyclically tight} or \emph{cyclically reduced} if every cyclic permutation of $\gamma$ is tight.


If $\Gamma_1,\Gamma_2$ are graphs, a \emph{graph-map} is a continuous map $f:\Gamma_1\to\Gamma_2$ such that $f(V\Gamma_1)\subseteq V\Gamma_2$ and such that for every oriented edge $e$ of $\Gamma_1$ its image $f(e)=e_1,\dots, e_n$ is a tight edge-path of positive simplicial length.
More precisely, we mean that there exists a finite subdivision $x_0=o(e), x_1,\dots, x_n=t(e)$ of $e$ such that $f(x_i)=t(e_i)$ for $i=1,\dots, n$ and that for $i=1,\dots, n$ the continuous map $f$ maps the open interval of $e$ between $x_{i-1}$ and $x_i$ homeomorphically onto the open edge $e_i$.  When graph-maps and train-track maps are defined in the context of studying $\Out(F_N)$, one often requires the graphs $\Gamma_1$ and $\Gamma_2$ to come equipped with specific chosen PL-structures and the graph-maps to respect those structures. See \cite{DKL} for a careful discussion on the topic. However, in the present paper we do not need these extra assumptions and, in the terminology of \cite{DKL}, we work with "topological graphs" and "topological graph-maps". 

Every graph-map $f:\Gamma_1\to\Gamma_2$ comes equipped with its \emph{derivative map} $Df: E\Gamma_1\to E\Gamma_2$: for each $e\in E\Gamma_1$ we define $(Df)(e)$ to be the initial edge of $f(e)$. 

Let $\Gamma$ be a finite graph and let $f:\Gamma\to\Gamma$ be a graph-map. Let $r=\#E_{top}\Gamma$ and let $E_{top}\Gamma=\{e_1',\dots, e_r'\}$ be an ordering of the set of topological edges of $\Gamma$. For each $i=1,\dots, r$ let $e_i$ be an oriented edge corresponding to some choice of an orientation on the topological edge $e_i'$.
The \emph{transition matrix} $A(f)=(a_{ij})_{i,j=1}^r$ of $f$ (with respect to this ordering) is an $r\times r$-matrix where the entry $a_{ij}$ is the total number of occurrences of $e_i^{\pm 1}$ in the path $f(e_j)$. We say that $A(f)$ is \emph{positive}, denoted $A(f)>0$, if $a_{ij}>0$ for all $1\le i,j\le r$. We say that $A=A(f)$ is \emph{irreducible} if for every $1\le i,j\le r$ there exists $t=t(i,j) \ge 1$ such that $(A^t)_{ij}>0$. Thus if $A(f)>0$ then $A(f)$ is irreducible.
Recall that a vertex $v\in V\Gamma$ is $f$-\emph{periodic}  (or just \emph{periodic}) if there exists $n\ge 1$ such that $f^n(v)=v$. Similarly, an edge $e\in E\Gamma$ is $f$-\emph{periodic} (or just \emph{periodic})  if there exists $n\ge 1$ such that $f^n(e)$ starts with $e$.  Since the sets $V\Gamma$ and $E\Gamma$ are finite, periodic vertices and periodic edges always exist.

\subsection{Train-tracks}

Let $\Gamma$ be a finite connected graph. A graph-map $f:\Gamma\to\Gamma$ is a \emph{train-track map} if for every edge $e\in E\Gamma$ and for every $n\ge 1$ the path $f^n(e)$ is tight (that is, if all the turns contained in $f^n(e)$ are non-degenerate). 
A train-track map $f:\Gamma\to\Gamma$  is \emph{expanding} if there exists $e\in E\Gamma$ such that $|f^n(e)|\to\infty$ as $n\to\infty$.

\begin{rem}
If $f:\Gamma\to\Gamma$ is a train-track map, then for every $m\ge 1$ the map $f^m:\Gamma\to\Gamma$ is also a train-track map. Moreover, the definition of  the transition matrix implies that for every $m\ge 1$ we have $A(f^m)=[A(f)]^m$.
\end{rem}

If $f:\Gamma\to\Gamma$ is a train-track map, we say that a turn $e,e'$ in $\Gamma$ is \emph{taken} by $f$ is there exist $n\ge 1$ and $e''\in E\Gamma$ such that the turn $e,e'$ is contained in the path $f^n(e'')$. Note that a taken turn is necessarily non-degenerate, since $f$ is a train-track map.

Let $\phi\in \Out(F_N)$. A \emph{topological representative}  of $\phi$ consists of a homotopy equivalence $\alpha:R_N\to \Gamma$ (sometimes called a \emph{marking}), where $\Gamma$ is a finite connected graph, and a graph-map $f:\Gamma\to\Gamma$ with the following properties:
\begin{enumerate}
\item The map $f$ is a homotopy equivalence.

\item If $\beta:\Gamma\to R_N$ is a homotopy inverse of $\alpha$ then at the level of $F_N=\pi_1(R_N)$, the map $\beta\circ f\circ \alpha: R_N\to R_N$ induces precisely the outer automorphism $\phi$.
\end{enumerate}

If $f:\Gamma\to\Gamma$ is a topological representative of $\phi\in \Out(F_N)$, a subgraph $\Delta\subseteq \Gamma$ is called a \emph{reduction for $\phi$} if $\Delta$ is $f$-invariant (that is $f(\Delta)\subseteq \Delta$), the inclusion $\iota:\Delta\to\Gamma$ is not a homotopy equivalence and if $\Delta$ is homotopically nontrivial, that is, at least one connected component of $\Delta$ is not contractible.
As shown in \cite{BH92}, $\phi\in\Out(F_N)$ is irreducible if and only if no topological representative of $\phi$ admits a reduction. In particular, if $f$ is a topological representative of $\phi$ which admits a reduction, then $\phi$ is reducible. We will use this fact in the proof of  Proposition~\ref{prop:BH92} below.

If $f:\Gamma\to\Gamma$ is a topological representative of $\phi\in \Out(F_N)$, the fact that $f$ induces a quasi-isometry of the universal cover $\tilde\Gamma$ of $\Gamma$ implies that for any semi-infinite tight edge-path $\rho=e_1,e_2,\dots$ in $\Gamma$ the path $f(\rho)=f(e_1)f(e_2)\dots$ tightens to a unique tight semi-infinite edge-path $\rho'$ starting with the vertex $o(f(e_1))$. 

For an outer automorphism $\phi\in \Out(F_N)$ (where $N\ge 2$), a \emph{train-track representative} of $\phi$ is a topological representative $f:\Gamma\to\Gamma$ of $\phi$ such that $f$ is a train-track map, and such that every vertex of $\Gamma$ has degree $\ge 3$.  Note that if $f:\Gamma\to\Gamma$ is a train-track representative of $\phi$ then for every $m\ge 1$ the map  $f^m:\Gamma\to\Gamma$ is a train-track representative of $\phi^m$. 

An important basic result of Bestvina and Handel~\cite{BH92} states that every irreducible $\phi\in \Out(F_N)$ (where $N\ge 2$) admits a train-track representative with an irreducible transition-matrix.

\begin{defn}[Whitehead graph of a train-track]
Let $f:\Gamma\to\Gamma$ be a train-track map representing $\phi\in \Out(F_N)$. Let $v\in V\Gamma$. The \emph{Whitehead graph} $Wh_\Gamma(v,f)$ is a simple graph defined as follows. The set of vertices of $Wh_\Gamma(v,f)$ is the set of all oriented edges $e$ of $\Gamma$ with $o(e)=v$.

Two distinct oriented edges $e',e''$ of $\Gamma$ with origin $v$ represent adjacent vertices in $Wh_\Gamma(v,f)$ if the turn $e',e''$ is taken by $f$, that is, if there exist $e\in E\Gamma$ and $n\ge 1$ such that the turn $e', e''$ is contained in the edge-path $f^n(e)$.

\end{defn}

\begin{rem}\label{rem:positive}
Let $\phi\in \Out(F_N)$ (where $N\ge 2$) and let $f:\Gamma\to\Gamma$ be a train-track representative such that for some $m\ge 1$ we have $A(f^m)>0$.
Then $A(f^t)$ is irreducible for all $t\ge 1$ and, moreover, $A(f^t)>0$ for all $t\ge m$. Hence for every $v\in V\Gamma$ and $t\ge 1$ we have $Wh_\Gamma(v,f)=Wh_\Gamma(v,f^t)$.
\end{rem}

\begin{lem}\label{lem:positive}
Let $\phi\in \Out(F_N)$ be an iwip and let $f:\Gamma\to\Gamma$ be a train-track representative of $\phi$. Then
\begin{enumerate}
\item The transition matrix $A(f)$ is irreducible and for each $e\in E\Gamma$ we have $|f^n(e)|\to\infty$ as $n\to\infty$.
\item There exists an integer $m\ge 1$ such that $A(f^m)>0$.
\end{enumerate}

\end{lem}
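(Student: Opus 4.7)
The plan is to derive both parts of the lemma from properties of the stable lamination $L=L(\phi)$ of the iwip $\phi$, associated with the train-track representative $f\colon \Gamma\to\Gamma$ via the construction of Bestvina--Feighn--Handel \cite{BFH97}. I take for granted two properties of $L$ that are standard in the iwip setting: (a) $L$ fills $\Gamma$, i.e.\ every edge $e\in E\Gamma$ appears as a subpath of some leaf of $L$; and (b) for every finite edge-subpath $\gamma$ of any leaf of $L$ and every edge $e'\in E\Gamma$ there exists $n_0=n_0(\gamma,e')\ge 1$ such that $\gamma$ occurs as a subpath of $f^n(e')$ whenever $n\ge n_0$. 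Property~(b) expresses the minimality of $L$ and its role as an attracting object for forward iteration of $f$.

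I would first establish part~(2). Since $E\Gamma$ is finite, properties~(a) and~(b) together yield a single integer $m\ge 1$ such that for every ordered pair $(e,e')$ of edges of $\Gamma$, the edge $e^{\pm 1}$ appears as a subpath of $f^m(e')$: apply~(a) to find, for each $e$, a leaf of $L$ containing $e$, then apply~(b) to the single-edge subpath $e$ and to each $e'$, and finally take the maximum of the finitely many resulting $n_0$'s. By the definition of the transition matrix, this is precisely the assertion $A(f^m)>0$.

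Part~(1) is then a quick consequence. Irreducibility of $A(f)$ follows from $(A(f)^m)_{ij}=A(f^m)_{ij}>0$ for all $1\le i,j\le r$. For the growth statement, note first that $r:=\# E_{top}\Gamma\ge N\ge 2$ by the Euler-characteristic identity $r=\# V\Gamma + N - 1$ applied to the connected graph $\Gamma$ with $\pi_1(\Gamma)=F_N$. Since $A(f^m)$ has non-negative integer entries all at least $1$, each column of $A(f^m)$ has column-sum at least $r$; equivalently, $|f^m(e)|\ge r\ge 2$ for every $e\in E\Gamma$. Because the train-track property prevents cancellation when $f$ is applied to a tight edge-path, one obtains by induction on $k$ that $|f^{km}(e)|\ge r^k$, and since $(|f^n(e)|)_{n\ge 0}$ is a non-decreasing sequence (again by the no-cancellation property together with the fact that $|f(e')|\ge 1$ for every edge $e'$), we conclude that $|f^n(e)|\to\infty$ as $n\to\infty$.

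The principal obstacle is justifying property~(b) of the stable lamination in the iwip setting, which is the key input from \cite{BFH97}; everything else reduces to elementary book-keeping with non-negative integer matrices and tight edge-paths.
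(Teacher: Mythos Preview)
Your derivation of part~(1) from part~(2) is clean and correct. The difficulty is that your proof of part~(2) is, in the logical structure of this paper and of \cite{BFH97}, circular. Property~(b) as you state it---that every finite leaf-segment $\gamma$ occurs in $f^n(e')$ for \emph{all} $n\ge n_0$---is essentially a restatement of the primitivity of $A(f)$: applied to a single-edge segment $\gamma=e$, it says precisely that every entry of $A(f^n)$ is positive once $n$ is large, which is part~(2). In \cite{BFH97} the attracting property of the stable lamination is established \emph{after} one knows the transition matrix is Perron--Frobenius, not before; and in the present paper the lamination $\Lambda(f)$ is only defined (Section~3) under the standing hypothesis that some $A(f^m)>0$, explicitly invoking this lemma to justify that hypothesis for iwips. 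So you cannot appeal to $\Lambda(f)$ here without begging the question.

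The paper's argument avoids this by working directly with the graph map. Part~(1) is immediate from the definitions (a proper $f$-invariant subgraph would give a reduction, contradicting irreducibility of $\phi$; and if some $|f^n(e)|$ stayed bounded one would find a periodic conjugacy class, contradicting the iwip property). For part~(2) the paper passes to a power $g=f^k$ fixing all periodic edges, builds a combinatorial eigenray from each periodic edge $e_0$, and observes that the set of edges traversed by this ray is a $g$-invariant subgraph---hence all of $\Gamma$, since $\phi^k$ is irreducible. This gives a uniform $t$ with $g^t(e_0)$ covering every edge for every periodic $e_0$; a further bounded iterate pushes every edge onto a periodic one, yielding $A(f^{km})>0$. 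The eigenray construction is morally the same object as your lamination leaf, but the paper extracts what it needs from it using only the iwip hypothesis, without invoking downstream lamination theory.
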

\begin{proof}
Part (1) is a straightforward corollary of the definitions, as observed, for example, on p. 5 of \cite{BH92}.

To see that (2) holds,  choose $s\ge 1$ such that every periodic vertex is fixed by $f^s$ and for every periodic edge $e$ of $\Gamma$ the path $f^s(e)$ begins with $e$. 
By part (1) we know that the length of every edge of $\Gamma$ goes to infinity under the iterations of $f$. Hence we can find a multiple $k$ of $s$ such that for every edge $e\in E\Gamma$ we have $|f^k(e)|\ge 2$. Put $g=f^k$. Thus $g:\Gamma\to\Gamma$ is a train-track representative of $\phi^k$. 

Now choose a periodic edge $e_0$ of $\Gamma$. Since $g(e_0)$ has length $\ge 2$ and starts with $e_0$, it follows that for every $n\ge 0$ the path $g^n(e_0)$ is a proper initial segment of $g^{n+1}(e_0)$. Let $\gamma=e_0,e_1,\dots, $ be a semi-infinite edge-path such that for all $n\ge 1$ $g^n(e_0)$ is an initial segment of $\gamma$. By construction we have $g(\gamma)=\gamma$. (That is why this $\gamma$ is sometimes called a \emph{combinatorial eigenray}, see~\cite{GJLL}). Let $\Gamma_0\subseteq \Gamma$ be the subgraph of $\Gamma$ obtained by taking the union of all the edges of $\gamma$ and their vertices.  By construction $g(\Gamma_0)\subseteq  \Gamma_0$ and hence $\Gamma_0=\Gamma$ since by assumption $\phi$ is an iwip and thus $\phi^k$ is irreducible. Thus there exists $t\ge 1$ such that $g^t(e_0)$ passes through every topological edge of $\Gamma$, and therefore, for all $n\ge t$ the path $g^n(e_0)$ passes through every topological edge of $\Gamma$. Applying the same argument to every periodic edge, we can find $t\ge 1$ such that for all $n\ge t$ and every periodic edge $e$ of $\Gamma$ the path $g^n(e)$ passes through every topological edge of $\Gamma$.

Since $E\Gamma$ is finite, there is an integer $b\ge 1$ such that for every edge $e\in E\Gamma$ the initial edge of $g^b(e)$ is periodic. Then for $m=b+t$ we have $A(g^m)=A(f^{km})>0$, as required.

\end{proof}

\begin{rem}\label{rem:m}
The proof of Lemma~\ref{lem:positive} can be straightforwardly modified to produce an algorithm that, given a train-track representative $f:\Gamma\to\Gamma$ of some $\phi\in \Out(F_N)$ such that $f$ satisfies condition (1) of Lemma~\ref{lem:positive}, decides whether or not there exists $m\ge 1$ such that $A(f^m)>0$, and if yes, produces such $m$. Namely, define $g=f^k$ exactly as in the proof of Lemma~\ref{lem:positive}. Then, given a periodic edge $e$, start iterating $g$ on $e$ until the first time we find $t\ge 1$ such that $g^{t+1}(e)$ passes through the same collection of topological edges of $\Gamma$ as does $g^t(e)$.  Let $\Gamma_0=\Gamma_0(e)$ be the subgraph of $\Gamma$ given by the union of edges of $g^t(e)$. By construction, we have $g(\Gamma_0)\subseteq \Gamma_0$. If $\Gamma_0\ne \Gamma$, then $\Gamma_0$ is a proper $f^k$-invariant subgraph of $\Gamma$ and hence there does not exist $m\ge 1$ such that $A(f^m)>0$.  If for every periodic edge $e$ we have $\Gamma_0(e)=\Gamma$, then we have found $t\ge 1$ such that for all $n\ge t$ and every periodic edge $e$ of $\Gamma$ the path $g^n(e)$ passes through every topological edge of $\Gamma$. Then, again as in the proof of Lemma~\ref{lem:positive}, we can find an integer $b\ge 1$ such that for every edge $e\in E\Gamma$ the initial edge of $g^b(e)$ is periodic. Then for $m=b+t$ we have $A(g^m)=A(f^{km})>0$.
\end{rem}

\section{Stable laminations}

In \cite{BFH97} Bestvina, Feighn and Handel defined the notion of a "stable lamination" associated to an iwip $\phi\in \Out(F_N)$.
A generalization of this notion for arbitrary automorphism plays a key role in the solution of the Tits Alternative for $\Out(F_N)$ by Bestvina, Feighn and Handel~\cite{BFH00,BFH05}. We need to state their definition of a "stable lamination" in a slightly more general context than that considered in \cite{BFH97}.

For the remainder of this section let $\phi\in \Out(F_N)$ be an outer automorphism (where $N\ge 2$) and let $f:\Gamma\to\Gamma$ be a train-track representative of $\phi$ such that for some $m\ge 1$ we have $A(f^m)>0$. (By a result of \cite{BH92} and Lemma~\ref{lem:positive} every iwip $\phi$ admits a train-track representative with the above property, and, moreover, every train-track representative of an iwip $\phi$ has this property.)

Note that the assumption on $f$ implies that $A(f^k)$ is irreducible for every $k\ge 1$ and, moreover, $A(f^k)>0$ for all $k\ge m$.

\begin{defn}[Stable lamination]
The \emph{stable lamination} $\Lambda(f)$ of $f$ consists of all the bi-infinite edge-paths
\[
\gamma= \dots e_{-1}, e_0, e_1, e_2, \dots 
\]
in $\Gamma$ with the following property:

For all $i\le j$, $i,j\in \mathbb Z$ there exist $n\ge 1$ and $e\in E\Gamma$ such that $e_i,\dots,e_j$ is a subpath of the path $f^n(e)$.
A path $\gamma$ as above is called a \emph{leaf} of $\Lambda(f)$.

Note that Remark~\ref{rem:positive}  implies that, under the assumptions on $f$ made in this section, for every $k\ge 1$ we have $\Lambda(f)=\Lambda(f^k)$.
\end{defn}

Let $H\le F_N$ be a nontrivial finitely generated subgroup. The \emph{$\Gamma$-Stallings core} $\Delta_H$ corresponding to $H$ (see \cite{Sta,KM} for details) is the smallest finite connected subgraph of the covering $\widehat\Gamma$ of $\Gamma$ corresponding to $H\le F_N$, such that the inclusion $\Delta_H\subseteq \widehat\Gamma$ is a homotopy equivalence. Note that $\Delta_H$ comes equipped with a canonical immersion $\Delta_H\to \Gamma$ obtained by the restriction of the covering map $\widehat\Gamma\to \Gamma$ to the subgraph $\Delta_H$.  By construction every vertex of $\Delta_H$ has degree $\ge 2$. Moreover, it is not hard to see that for every $w\in F_N$ we have $\Delta_H=\Delta_{wHw^{-1}}$.

We say that a nontrivial finitely generated subgroup $H\le F_N$ \emph{carries a leaf} of $\Lambda(f)$ if there exists a leaf $\gamma$ of $\Lambda(f)$ such that $\gamma$ lifts to a bi-infinite path in $\Delta_H$.

\section{Whitehead graphs and algorithmic decidability of being an iwip}

The following statement, based on the procedure of "blowing up" a train-track, is fairly well-known, and first appears, in somewhat more restricted context, in the proof of Proposition~4.5 in \cite{BH92}.  We present a sketch of the proof for completeness.

\begin{prop}\label{prop:BH92}
Let $N\ge 2$, $\phi\in \Out(F_N)$ and let $f:\Gamma\to\Gamma$ be an expanding train-track representative of $\phi$.
Suppose that there exists a vertex $u\in V\Gamma$ such that the Whitehead graph $Wh_\Gamma(u,f)$ is disconnected.
Then $\phi$ is reducible.

\end{prop}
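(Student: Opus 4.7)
The plan is to apply the standard "blow-up" construction at $u$ to produce a new topological representative of $\phi$ admitting a reduction; then reducibility of $\phi$ follows from the Bestvina--Handel characterization quoted earlier in the text.

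Let $C_1,\dots,C_k$ (with $k\ge 2$) denote the connected components of $Wh_\Gamma(u,f)$, each a nonempty set of oriented edges originating at $u$. First I would form a graph $\Gamma'$ by replacing $u$ with $k$ new vertices $u_1,\dots,u_k$ joined by a tree $T$ with $k-1$ new edges, and by declaring $o(e)=u_i$ for every oriented edge $e\in C_i$ of $\Gamma$. Collapsing $T$ to a point is a homotopy equivalence $\Gamma'\to\Gamma$, so $\pi_1(\Gamma')$ is canonically identified with $F_N$. Let $\Delta\subseteq\Gamma'$ be the subgraph consisting of all edges inherited from $\Gamma$.

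Next I would define a topological representative $f'\colon\Gamma'\to\Gamma'$ of $\phi$ under which $\Delta$ is invariant. For each old edge $e$, the tight path $f(e)$ in $\Gamma$ lifts canonically to a tight path in $\Gamma'$: each passage of $f(e)$ through $u$ is a taken turn, so the entering and leaving edges lie in a common component $C_i$ and accordingly lift to edges incident to the single vertex $u_i$. The lift thus never traverses a tree edge of $T$, and I set $f'(e)$ equal to it. On the new tree edges of $T$ I would extend $f'$ in any way making it a homotopy equivalence realizing $\phi$ on $\pi_1$, for instance by choosing tight paths joining the designated images of the endpoints. Invariance of $\Delta$ under $f'$ is then immediate from the definition on old edges.

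Finally I would verify that $\Delta$ is a reduction. It is $f'$-invariant and properly contained in $\Gamma'$ since $T$ has at least one edge. A direct Euler-characteristic calculation gives $\chi(\Delta)=k-N$, and if $c$ denotes the number of components of $\Delta$, the total rank of $\pi_1(\Delta)$ equals $N-k+c$; in particular the inclusion $\Delta\hookrightarrow\Gamma'$ is never a homotopy equivalence, since either $\Delta$ is disconnected while $\Gamma'$ is connected, or else $c=1<k$ and the ranks differ. To see that $\Delta$ is homotopically nontrivial, I would use the expanding hypothesis: since $|f^n(e)|\to\infty$ for some edge $e$, pigeonhole yields an oriented edge occurring twice in a single $f^n(e)$, and the subpath of $f^n(e)$ between these two occurrences is a closed tight path in $\Gamma$ all of whose turns are taken by $f$. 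This path lifts to a tight closed path in $\Delta$ (the turns at $u$ all lying in single components $C_i$), giving a nontrivial element of $\pi_1(\Delta)$. Thus $\Delta$ is a reduction and $\phi$ is reducible.

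The main technical nuisance is the clean construction of $f'$ on the tree edges of $T$, together with the verification that the resulting map really is a topological representative of $\phi$ (inducing the correct outer automorphism on $\pi_1$); this requires routine but somewhat delicate bookkeeping with markings and homotopy inverses, which is probably why the paper presents only a sketch.
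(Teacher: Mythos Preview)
Your approach via the blow-up construction is the same idea as the paper's, but there is a gap in your construction of $f'$ that you have not addressed: you blow up only the single vertex $u$, whereas the paper blows up \emph{every} vertex of $\Gamma$, and this difference matters.

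Concretely, suppose there is a vertex $v\ne u$ with $f(v)=u$. In your $\Gamma'$ the vertex $v$ is left intact, so for $f'$ to be a graph-map you need a single well-defined image $f'(v)\in V\Gamma'$, and for every old edge $e$ with $o(e)=v$ the lift $f'(e)$ must start at that vertex. But your lift of $f(e)$ begins at the sub-vertex $u_i$ determined by the component $C_i$ containing $Df(e)$, and nothing forces all edges at $v$ to have their $Df$-images in the same $C_i$. The relevant fact (used in the paper) is that if $e_1,e_2$ are adjacent in $Wh_\Gamma(v,f)$ then $Df(e_1),Df(e_2)$ are adjacent in $Wh_\Gamma(f(v),f)$; this guarantees consistency only when $Wh_\Gamma(v,f)$ is connected. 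If $Wh_\Gamma(v,f)$ is itself disconnected and its components land under $Df$ in distinct $C_i$'s, your map $f'$ is not well-defined at $v$. You cannot repair this by prefixing $f'(e)$ with tree-edges of $T$, since that would destroy the invariance $f'(\Delta)\subseteq\Delta$ on which the whole argument rests.

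The paper's version sidesteps this by blowing up every vertex simultaneously: each $v$ is replaced by a star with a center-vertex $v^\ast$ and one sub-vertex per component of $Wh_\Gamma(v,f)$, so edges at $v$ are already separated according to their Whitehead components and the adjacency fact above makes $f'$ well-defined on sub-vertices. The remainder of your sketch---lifting $f(e)$ through taken turns, checking that $\Delta\hookrightarrow\Gamma'$ is not a homotopy equivalence, and using the expanding hypothesis to locate a nontrivial circuit in $\Delta$---is correct and parallels the paper's argument.
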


\begin{proof}[Sketch of proof]
We construct a graph $\Gamma'$ and a graph-map $f':\Gamma'\to\Gamma'$ as follows. For each vertex $v$ of $\Gamma$ introduce a new vertex $v^\ast$ and $k$ vertices new $v_1,\dots, v_k$ where $k$ is the number of connected components of $Wh_\Gamma(v,f)$. We call $v^\ast$ a \emph{center-vertex} and the vertices $v_i$ \emph{sub-vertices}. The vertex set of $\Gamma'$ consists of the center-vertices and sub-vertices corresponding to all $v\in V\Gamma$.
The edge-set of $\Gamma'$ is a disjoint union of two sets of edges. First, every oriented edge $e$ of $\Gamma$ is also an edge of $\Gamma'$. For $e\in E\Gamma$ with $v=o(e)$ in $\Gamma$ we put $o(e)=v_i$ in $\Gamma'$ where $v_i$ is the sub-vertex coming from $v$ corresponding to the connected component of $Wh_\Gamma(u,f)$ containing $e$.
Second, for each $v\in V\Gamma$ with the corresponding sub-vertices $v_1,\dots, v_k$ we have an edge connecting $v^\ast$ and $v_i$ in $\Gamma'$. 
We call these latter types of edges of $\Gamma'$ \emph{sub-edges} corresponding to $v$.  Note that the graph $\Gamma'$ is connected but it may have degree-one vertices (namely, those  center-vertices $v^\ast$ such that $Wh_\Gamma(v,f)$ is connected).

We now define a map $f':\Gamma'\to \Gamma'$. For each vertex $v\in V\Gamma$ with $z=f(v)$ put $f'(v^\ast)=z^\ast$. 
Let $v_i$ be a sub-vertex corresponding to $v$ and $e$ is an edge of $\Gamma$ originating at $v$ and belonging to the connected component of $Wh_\Gamma(v,f)$ representing $v_i$. We put $f'(v_i)$ to be the sub-vertex at $z=f(v)$ corresponding to the initial edge $Df(e)$ of $f(e)$. It is easy to check that if two edges $e_1,e_2\in E\Gamma$ with origin $v$ are adjacent in $Wh_\Gamma(v,f)$ then the edges $Df(e_1)$ and $Df(e_2)$ are adjacent in $Wh_\Gamma(z,f)$.
It follows that for any edge $e\in E\Gamma$ the edge-path $f(e)$ in $\Gamma$ can also be viewed as an edge-path in $\Gamma'$ and we put $f'(e)=f(e)$.
Finally, if $e$ a sub-edge at $v$ joining $v^\ast$ and a sub-vertex $v_i$, and if $z=f(v)$, we put $f'(e)$ to be the sub-edge joining $z^\ast$ and the sub-vertex $f'(v_i)$.
A straightforward check shows that $f':\Gamma'\to \Gamma'$ is a continuous graph-map.
Moreover, contracting all the sub-edges in $\Gamma'$ to points is a homotopy equivalence between $\Gamma'$ and $\Gamma$. Thus $f':\Gamma'\to \Gamma'$ is a topological representative of $\phi$.

Let $\Delta$ be the subgraph of $\Gamma'$ given by the union of all the edges of $\Gamma$ and of their end-vertices in $\Gamma'$ (i.e. of all the sub-vertices). Thus, topologically, $\Delta$ is obtained from $\Gamma'$ by removing all the center-vertices and the interiors of all the sub-edges.

By construction we have $f'(\Delta)\subseteq \Delta$. The assumption that there exists a vertex $u\in V\Gamma$ such that the Whitehead graph $Wh_\Gamma(u,f)$ is disconnected implies that the inclusion $\Delta\subseteq \Gamma'$ is not a homotopy equivalence. Moreover, the graph $\Delta$ is not a forest. Indeed, by assumption $f$ is expanding. Choose an edge $e$ of $\Gamma$ and $n\ge 1$ such that the simplicial length of $f^n(e)$ is greater than the number of oriented edges in  $\Gamma$. Then $f^n(e)$ contains an edge subpath $\gamma$ such that $\gamma$ is a nontrivial simple circuit in $\Gamma$. Then, by definition of $\Gamma'$ and $\Delta$, $\gamma$ is also a circuit in $\Delta$. Thus $\Delta$ is not a forest. Since $\Delta$ is $f'$-invariant, homotopically nontrivial, and its inclusion in $\Gamma'$ is not a homotopy equivalence, we conclude that $\phi$ is reducible, as claimed.

\end{proof}

\begin{prop}\label{prop:BFH97}
Let $N\ge 2$, $\phi\in \Out(F_N)$ and let $f:\Gamma\to\Gamma$ be a train-track representative of $\phi$ such that $A(f)>0$. Suppose that for every $v\in V\Gamma$ the Whitehead graph $Wh_\Gamma(v,f)$ is connected.

Then there does not exist a finitely generated subgroup of infinite index in $F_N$ that carries a leaf of the lamination $\Lambda(f)$. 
\end{prop}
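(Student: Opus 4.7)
The plan is to argue by contradiction. Suppose $H\le F_N$ is finitely generated, of infinite index, and carries a leaf $\gamma$ of $\Lambda(f)$. Let $p:\Delta_H\to\Gamma$ be the canonical immersion and let $\tilde\gamma$ be the lift of $\gamma$ to a bi-infinite reduced edge-path in the Stallings core $\Delta_H$.

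The first step is to exploit the assumption $A(f)>0$ to establish minimality of $\Lambda(f)$. Since $A(f^k)>0$ for every $k\ge 1$, standard Perron--Frobenius arguments show: if $\alpha$ is a finite subpath of any leaf, then $\alpha$ occurs as a subpath of $f^n(e)$ for some $n,e$, and by positivity $\alpha$ then occurs in $f^{n+m}(e')$ for every $e'\in E\Gamma$ and all sufficiently large $m$. Combined with shift-invariance of leaves, this forces every finite subpath of any leaf of $\Lambda(f)$ to occur in every other leaf infinitely often in both directions. In particular, every edge of $\Gamma$, and more importantly every edge of every Whitehead graph $Wh_\Gamma(v,f)$ (i.e.\ every taken turn at every vertex $v$), appears in our specific leaf $\gamma$ infinitely often.

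The second step is to reduce to a covering statement. Define $C\subseteq\Delta_H$ to be the subgraph consisting of those vertices and edges of $\Delta_H$ that $\tilde\gamma$ visits infinitely often in both directions. Because $\Delta_H$ is finite and $\tilde\gamma$ is bi-infinite, $C$ is non-empty; by the first step and the fact that every edge of $\Gamma$ appears in $\gamma$ infinitely often, $p(C)=\Gamma$. The restriction $p|_C:C\to\Gamma$ is again an immersion. The key claim is that $p|_C$ is in fact a covering map; if so, then $C$ is a finite cover of $\Gamma$ contained in $\Delta_H$, so $H$ has finite index in $F_N$, contradicting our standing hypothesis.

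The third, and main, step is to verify that $p|_C$ is surjective on links. For $\tilde v\in V(C)$ with $v=p(\tilde v)$, let $E^C_{\tilde v}\subseteq E_v$ denote the set of oriented edges of $\Gamma$ at $v$ whose lift at $\tilde v$ lies in $C$. I plan to show $E^C_{\tilde v}=E_v$ by arguing that $E^C_{\tilde v}$ is non-empty (the visits of $\tilde\gamma$ to $\tilde v$ produce some edges) and is a union of connected components of $Wh_\Gamma(v,f)$; then connectedness of $Wh_\Gamma(v,f)$ forces $E^C_{\tilde v}=E_v$. Closure of $E^C_{\tilde v}$ under the adjacency relation of $Wh_\Gamma(v,f)$ is the technical heart: given $e_1\in E^C_{\tilde v}$ and a taken turn $\{e_1,e_2\}$ at $v$, we must show $e_2\in E^C_{\tilde v}$. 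The main obstacle here is that a priori the infinitely many occurrences of the turn $\{e_1,e_2\}$ in $\gamma$ can lift to visits of $\tilde\gamma$ at various lifts of $v$ in $\Delta_H$, not necessarily at our chosen $\tilde v$. To overcome this, I plan to use recurrence: since both $\tilde v$ and some lift of $v$ realizing the turn $\{e_1,e_2\}$ are visited by $\tilde\gamma$ infinitely often, minimality from step one lets me find a finite subpath of $\gamma$ in which a visit to $v$ lifting to $\tilde v$ (and using the lift of $e_1$) is connected, by an explicit leaf-segment, to a subsequent visit of $v$ that realizes the turn $\{e_1,e_2\}$; lifting this entire segment into $\Delta_H$ and tracking endpoints pins the turn at $\tilde v$ and forces $e_2\in E^C_{\tilde v}$.
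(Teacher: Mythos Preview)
Your overall strategy---show directly that a subgraph $C\subseteq\Delta_H$ covers $\Gamma$---is appealing, but Step~3 has a genuine gap. The closure claim ``if $e_1\in E^C_{\tilde v}$ and $\{e_1,e_2\}$ is a taken turn then $e_2\in E^C_{\tilde v}$'' is not established by your endpoint-tracking argument. You take a segment of $\tilde\gamma$ from $\tilde v$ (using $\tilde e_1$) to some other lift $\tilde v'$ where the turn $\{e_1,e_2\}$ is realized; its projection $\sigma$ is a leaf-segment, and minimality in $\Gamma$ tells you $\sigma$ recurs in $\gamma$. But the various recurrences of $\sigma$ in $\gamma$ lift to segments of $\tilde\gamma$ starting at \emph{various} lifts of $v$, and nothing forces any of them to start at $\tilde v$. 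Knowing that $\tilde e_1$ is crossed at $\tilde v$ infinitely often does not help: uniform recurrence places a copy of $\sigma$ within bounded distance of each such crossing, but not \emph{beginning} at that crossing. Concretely, $\Delta_H\to\Gamma$ is only an immersion, so the link of $\tilde v$ may be a proper subset of the link of $v$; the lift of $e_2$ at $\tilde v$ need not even exist in $\Delta_H$, and $E^C_{\tilde v}$ can be a proper subset of $E_v$ that is \emph{not} a union of components of $Wh_\Gamma(v,f)$. Your argument, as written, assumes away exactly the phenomenon the proposition is meant to rule out.

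The paper circumvents this by first completing $\Delta_H$ to a finite cover $\Gamma'\to\Gamma$ and then lifting $f$ (after passing to a power, using that finite-index subgroups are eventually $\phi$-invariant) to a train-track map $f':\Gamma'\to\Gamma'$. Because $\Gamma'\to\Gamma$ is a genuine cover, links lift isomorphically, and one shows that the Whitehead graphs $Wh_{\Gamma'}(v',f')$ are precisely the lifts of $Wh_\Gamma(v,f)$, hence connected. From this one deduces $A(f')$ is irreducible. The contradiction then comes from finding an $f'$-periodic edge in $\Delta_H\subsetneq\Gamma'$ whose $f'$-iterates all stay in $\Delta_H$, violating irreducibility. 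The essential extra ingredient you are missing is the lift of the dynamics to a cover; without it there is no mechanism forcing the local picture at $\tilde v$ in $\Delta_H$ to see the full Whitehead graph.
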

\begin{proof}[Sketch of proof]

The proof Proposition~2.4 in \cite{BFH97} and the proof of Lemma~2.1 in \cite{BFH97} on which Proposition~2.4 relies, work verbatim under the above assumptions.
The conclusion of Proposition~2.4 of \cite{BFH97} is exactly the conclusion that we need, namely that no f.g. subgroup of infinite index in $F_N$ carries a leaf of $\Lambda(f)$.
We provide a sketch of the proof, for completeness. 

Note that for any $k\ge 1$ we have $A(f^k)>0$ and $\Lambda(f^k)=\Lambda(f)$. Thus if needed, we can always replace $f$ by its positive power, and we will repeatedly do so below.

Suppose that a leaf of $\Lambda(f)$ is carried by a finitely generated infinite index subgroup $H\le F_N$.  First, by adding some edges, we complete $\Delta_H$  to a finite cover $\Gamma'$ of $\Gamma$. Note that since $H$ has infinite index in $F_N$, we really do need to add at least one new edge to get $\Gamma'$ from $\Gamma$.  By replacing $f$ by a power we may assume that $f$ fixes some vertex $v_0$ of $\Gamma$ and that $F_N=\pi_1(\Gamma,v_0)$.  Let $v_1$ be a vertex of $\Gamma'$ which projects to $v_1$ and let $H_1\le F_N=\pi_1(\Gamma,v_0)$ be the image in $\pi_1(\Gamma,v_0)$ of $\pi_1(\Gamma',v_1)$ under the covering map $\Gamma_1\to \Gamma$. Note that $[F_N:H_1]<\infty$.  Since $H_1$ has finite index in $F_N$, after replacing $f$ by its positive power, we may assume that $f_\#(H_1)=H_1$. Hence $f$ lifts to a map $f':\Gamma'\to\Gamma'$.
 Denote the covering map by $\pi: \Gamma'\to\Gamma$. By construction, $f'$ is a train-track map and for every $k\ge 1$ $(f')^k$ is a lift of $f^k$.  We may assume, after passing to powers, that for every $f'$-periodic edge $e'$ of $\Gamma'$ the path $f'(e')$ begins with $e'$, and that the same property holds for $f$.  Obviously, every turn in $\Gamma'$ taken by $f'$ projects to a turn in $\Gamma$ taken by $f$. 

{\bf Claim 1.} We claim that, after possibly replacing $f'$ by a further power,  if $a'b'$ be a reduced edge-path of length two in $\Gamma'$ projecting to a path $ab$ in $\Gamma$ such that the turn $a^{-1},b$ is taken by $f$ then the path $f'(a')$ contains the turn $(a')^{-1},b'$ and the path $f'(b')$ also contains the turn $(a')^{-1},b'$.

Indeed, let $a'b'$ be a reduced edge-path of length two in $\Gamma'$ projecting to a path $ab$ in $\Gamma$ such that the turn $a^{-1},b$ is taken by $f$. The assumption on $f$ implies that, after possibly passing to further powers, we have $f(a)=\dots ab\dots$ (here the dots represent nondegenerate edge-paths of positive simplicial length). This yields a fixed point $x$ of $f$ in the interior of $a$. Since $f$ is a homotopy equivalence, the map $f'$ permutes the finite set  $\pi^{-1}(x)$ in $\Gamma'$. Passing to a further power, we may assume that $f'$ actually fixes $\pi^{-1}(x)$ pointwise. Therefore we get a fixed point of $f'$ inside $a'$ and, using the fact that $\pi$ is a covering, we conclude that the path $f'(a')$ contains the turn $(a')^{-1},b'$. A similar argument shows that (again after possibly taking further powers), the path $f'(b')$ also contains the turn $(a')^{-1},b'$.  Thus Claim~1 is verified.

\bigskip

We now pass to an iterate of $f'$ for which the conclusion of Claim~1 holds, and replace $f$ by its corresponding iterate.
This implies, in particular, that a non-degenerate turn $\Gamma'$ is taken by $f'$ if and only if this turn is a lift to $ \Gamma'$ of a turn taken by $f$.Then for every vertex $v'$ of $\Gamma'$ projecting to a vertex $v$ in $\Gamma$ the Whitehead graph $Wh_{\Gamma'}(v',f')$ is exactly the lift of  $Wh_\Gamma(v,f)$, and, in particular, $Wh_{\Gamma'}(v',f')$ is connected.

{\bf Claim 2.} The matrix $A(f')$ is irreducible. 

Let $a',b'$ be arbitrary edges of $\Gamma'$. Consider the maximal subgraph $\Gamma''$ of $\Gamma'$ obtained as the union of all edges $c'$ admitting an edge-path $a'=e_0',\dots, e_n'=c'$ in $\Gamma'$ such that every turn contained in this path is taken by $f'$. Claim~1 above  now implies that for every edge $c'$ of $\Gamma'$ some $f'$-iterate of $a'$ passes through $c'$. We claim that $\Gamma''=\Gamma'$. If not, then there exists a vertex $v'$ of $\Gamma$ which is adjacent to both $\Gamma''$ and $\Gamma'\setminus \Gamma''$. The Whitehead graph $Wh_{\Gamma'}(v',f')$ is connected, and hence there is an $f'$-taken turn at $v'$ consisting of an edge of $\Gamma''$ and an edge of $\Gamma'\setminus \Gamma''$, contrary to maximality  of $\Gamma''$. Thus indeed $\Gamma''=\Gamma'$ and hence $b'\in E\Gamma'$. This means that some iterate of $a'$ under $f'$ passes through $b'$. Since $a',b'$ were arbitrary, it follows that $A(f')$ is irreducible, and Claim~2 is established.

Recall that we assumed that the statement of the proposition fails for a finitely generated subgroup of infinite index $H\le F_N$ , so that there exists a leaf $\gamma$ of $\Lambda(f)$ that lifts to $\Delta_H$. Choose an $f$-periodic edge $e$ in $\gamma$. Then for every $n\ge 1$ the path $f^n(e)$ lifts to a path $\alpha_n$ in $\Delta_H\subseteq \Gamma'$. Each $\alpha_n$ projects to $f^n(e)$ and starts with an $f'$-periodic edge $e_n'$. Since $\Gamma'$ is finite, we can find a sequence $n_i\to\infty$ as $i\to\infty$ and an $f'$-periodic edge $e'$  of $\Gamma'$ such that for all $i=1,2,\dots$ we have $e_{n_i}'=e'$, so that $\alpha_{n_i}$ starts with $e'$. Since $e'$ is $f'$-periodic and $f'(e')$ starts with $e'$, and since $f'$ is a lift of $f$, it follows that the path $(f')^{n_i}(e')=\alpha_{n_i}$ is contained in $\Delta_H$ for all $i\ge 1$. Since for every $s\le n_i$ $(f')^s(e')$ is an initial segment of $(f')^{n_i}(e')$, it follows that  for every $n\ge 1$ the path $(f')^n(e')$ is an edge-path in $\Delta_H$. Therefore for an edge $e''$  contained in $\Gamma_1\setminus\Delta_H$ there does not exist $n\ge 1$ such that $(f')^n(e')$ passes through $e''$.  This contradicts the fact that $A(f')$ is irreducible. 

\end{proof}

\begin{defn}[Clean train-track]
Let $f:\Gamma\to\Gamma$ be a train-track map. We say that $f$ is \emph{clean} if for some $m\ge 1$ we have $A(f^m)>0$ and if for every vertex $v$ of $\Gamma$ the Whitehead graph $Wh_\Gamma(f,v)$ is connected.
\end{defn}

\begin{prop}\label{prop:crit}
Let $N\ge 3$ and let $\phi\in \Out(F_N)$ be an atoroidal element. Then the following conditions are equivalent:

\begin{enumerate}
\item The automorphism $\phi$ is an iwip.

\item There exists a clean train-track representative $f:\Gamma\to\Gamma$ of $\phi$ and, moreover, every train-track representative of $\phi$ is clean.

\item There exists a clean train-track representative $f:\Gamma\to\Gamma$ of $\phi$.

\end{enumerate} 
\end{prop}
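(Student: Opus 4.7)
The plan is to prove the cycle $(1)\Rightarrow(2)\Rightarrow(3)\Rightarrow(1)$. For $(1)\Rightarrow(2)$, Lemma~\ref{lem:positive} tells us that every train-track representative $f:\Gamma\to\Gamma$ of an iwip $\phi$ is expanding and satisfies $A(f^m)>0$ for some $m\ge 1$. If some Whitehead graph $Wh_\Gamma(v,f)$ were disconnected, Proposition~\ref{prop:BH92} would produce a reduction of $\phi$, contradicting iwipness; hence every such graph is connected and $f$ is clean. Existence of some train-track representative of an iwip is the Bestvina--Handel theorem cited above. The implication $(2)\Rightarrow(3)$ is trivial.

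For $(3)\Rightarrow(1)$, let $f:\Gamma\to\Gamma$ be a clean train-track representative of an atoroidal $\phi$ and suppose for contradiction that $\phi$ is not iwip. Then there exist $k\ge 1$ and a proper free factor $A\le F_N$ with $\phi^k([A])=[A]$. Atoroidality forces $\mathrm{rank}(A)\ge 2$: a rank-one $\phi^k$-invariant free factor $\langle a\rangle$ would make $[a]$ a $\phi^{2k}$-periodic conjugacy class. Thus $A$ is a finitely generated subgroup of infinite index in $F_N$. Replacing $\phi$ and $f$ by their $km$-th powers (where $m$ is as in Lemma~\ref{lem:positive}), which leaves $\Lambda(f)$ unchanged, preserves cleanness by Remark~\ref{rem:positive}, and yields $A(f)>0$, I may assume outright that $A(f)>0$ and $\phi([A])=[A]$, and lift $\phi$ to $\Phi\in\Aut(F_N)$ with $\Phi(A)=A$. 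The goal is to exhibit a leaf of $\Lambda(f)$ carried by $A$, contradicting Proposition~\ref{prop:BFH97}.

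To produce the leaf, pick a nontrivial $c\in A$ and for each $n\ge 0$ let $\gamma_n$ be the cyclically reduced loop in $\Gamma$ representing $[\Phi^n(c)]$. Since $\Phi^n(c)\in A$, each $\gamma_n$ lifts to a cyclically reduced loop $\tilde\gamma_n$ in the cover $\widehat\Gamma$ corresponding to $A$, and any cyclically reduced loop in $\widehat\Gamma$ lies in the Stallings core $\Delta_A$. Writing $\gamma_0=e_1\cdots e_\ell$, the loop $\gamma_n$ is the cyclic tightening of the concatenation $f^n(e_1)\cdots f^n(e_\ell)$. Cooper's bounded cancellation lemma bounds the total cancellation at each seam by a quantity that grows only polynomially in $n$, whereas each $|f^n(e_i)|$ grows exponentially because $A(f)>0$ forces a Perron--Frobenius eigenvalue $>1$. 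Consequently, for all large $n$, the loop $\gamma_n$ contains a subpath $\beta_n$ of length tending to infinity which lies entirely inside some single $f^n(e_i)$.

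Finally, lift $\beta_n$ and its midpoint $p_n$ to $\Delta_A$. Since $\Delta_A$ is finite, a standard diagonal/compactness extraction produces a subsequence along which the subpaths of $\tilde\gamma_n$ around $\tilde p_n$ converge to a bi-infinite edge-path $\tilde\gamma\subseteq\Delta_A$. Its projection $\gamma=\pi(\tilde\gamma)$ is a bi-infinite path in $\Gamma$ each of whose finite subpaths is, for $n$ sufficiently large, a subpath of $f^n(e_i)$; therefore $\gamma\in\Lambda(f)$, and by construction $\tilde\gamma\subseteq\Delta_A$ witnesses that $A$ carries $\gamma$. This contradicts Proposition~\ref{prop:BFH97}, forcing $\phi$ to be an iwip. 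The principal technical obstacle is the quantitative bounded-cancellation estimate ensuring that seam cancellation in $f^n(\gamma_0)$ is genuinely dominated by the exponential growth of edge lengths, so that the long "pure image" subpaths $\beta_n$ actually persist in $\gamma_n$; the remainder is routine compactness inside the finite graph $\Delta_A$ together with the definition of $\Lambda(f)$.
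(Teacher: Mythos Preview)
Your overall architecture matches the paper's: the cycle $(1)\Rightarrow(2)\Rightarrow(3)\Rightarrow(1)$, the use of Lemma~\ref{lem:positive} and Proposition~\ref{prop:BH92} for $(1)\Rightarrow(2)$, and for $(3)\Rightarrow(1)$ the plan of taking a periodic proper free factor $A$, iterating a loop $\gamma_0$ in $A$ under $f$, extracting long subpaths lying inside single blocks $f^n(e_i)$, and then using compactness in $\Delta_A$ to produce a leaf of $\Lambda(f)$ carried by $A$, contradicting Proposition~\ref{prop:BFH97}. The side remark that atoroidality forces $\mathrm{rank}(A)\ge 2$ is correct but unnecessary: any proper free factor is finitely generated of infinite index, which is all Proposition~\ref{prop:BFH97} needs.

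There is, however, a genuine gap in your quantitative step. You assert that Cooper's bounded cancellation lemma bounds the cancellation at each seam of $f^n(e_1)\cdots f^n(e_\ell)$ by a quantity growing only polynomially in $n$. This is false in general: the bounded-cancellation constant for $f^n$ satisfies $\mathrm{BCC}(f^n)\le C(1+\lambda+\cdots+\lambda^{n-1})$, which is of order $\lambda^n$, the same order as $|f^n(e_i)|$. Concretely, if the turn $e_i^{-1},e_{i+1}$ lies on an indivisible Nielsen path, the cancellation at that seam in $f^n(e_i)f^n(e_{i+1})$ grows like $\lambda^n$. So your comparison ``polynomial cancellation versus exponential growth'' does not go through.

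The fix is exactly what the paper does, and it is simpler than what you attempted. Since $f$ is a train-track map, each $f^{n}(e_i)$ is already reduced, so the cyclically tightened $\gamma_n$ is a concatenation of at most $\ell$ (possibly empty) segments, each a subsegment of some $f^{n}(e_i)$. Now invoke atoroidality directly: it forces $|\gamma_n|\to\infty$, hence by pigeonhole one of these $\le \ell$ segments has length tending to infinity. No cancellation estimate is needed at all. From that point your compactness extraction in $\Delta_A$ and the identification of the limit as a leaf of $\Lambda(f)$ are correct and coincide with the paper's argument.
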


\begin{proof}

We first show that (1) implies (2). Thus suppose that $\phi$ is a atoroidal iwip. Then, as proved in \cite{BH92}, there exists a train-track representative of $\phi$. Let $f:\Gamma\to\Gamma$ be an arbitrary train-track representative of $\phi$. Since $\phi$ is an iwip, Lemma~\ref{lem:positive} implies that $A(f)$ is irreducible and that there exists $m\ge 1$ such that $A(f^m)>0$. Hence, by Remark~\ref{rem:positive}, for all $v\in V\Gamma$ and all $t\ge 1$ we have $Wh_\Gamma(f,v)=Wh_\Gamma(f^t,v)$.
Moreover, Proposition~\ref{prop:BH92} now implies that for every vertex $v$ of $\Gamma$ the Whitehead graph $Wh_\Gamma(f,v)=Wh_\Gamma(f^m,v)$ is connected. Thus $f$ is clean and condition (2) is verified.

It is obvious that (2) implies (3). It remains to show that (3) implies (1).  Thus suppose that there exists a clean train-track representative $f:\Gamma\to\Gamma$ of $\phi$.

We claim that $\phi$ is an iwip. 
Suppose not. Then $\phi^m$ is not an iwip either. Thus we may assume that $m=1$, so that  $A(f)>0$.

Then there exists a proper free factor $H$ of $F_N$ such that for some $k\ge 1$ we have $\phi^k([H])=[H]$. Let $\Delta_H$ be the $\Gamma$-Stallings core for $H$.
Choose a nontrivial element $h\in H$ and let $\gamma$ be an immersed circuit in $\Gamma$ representing the conjugacy class of $h$. Since by assumption $\phi$ is atoroidal, the cyclically tightened length of $f^n(\gamma)$ tends to $\infty$ as $n\to\infty$. Let $s$ be the simplicial length of $\gamma$, so that $\gamma=e_1\dots e_s$. 
 Let $\gamma_n$ be the immersed circuit in $\Gamma$ given by the cyclically tightened form of $f^{kn}(\gamma)$. We can obtain $\gamma_n$ by cyclic tightening of the path $f^{nk}(e_1)\dots f^{nk}(e_s)$. Thus $\gamma_n$ is a concatenation of $\le s$ segments, each of which is a subsegment of $f^{nk}(e)$ for some $e\in E\Gamma$. Since the simplicial length of $\gamma_n$ goes to infinity as $n\to\infty$, the length of at least one of these segments tends to infinity as $n\to\infty$.

By assumption $\gamma_n$ lifts to a circuit in $\Delta_H$. Hence there exists a sequence of segments $\alpha_n$ in $\Gamma$ such that each $\alpha_n$ lifts to a path in $\Delta_H$, such that the simplicial length of $\alpha_n$ goes to infinity as $n\to\infty$ and such that there are $e_n\in E\Gamma$ and  $t_n\ge 1$ with the property that $\alpha_n$ is a subpath of $f^{t_n}(e_n)$. Moreover, since $E\Gamma$ is finite, after passing to a subsequence we can even assume that $e_n=e\in E\Gamma$ for all $n\ge 1$. By a standard compactness argument, it follows that $H$ carries a leaf of $\Lambda(f)$, contrary to the conclusion of Proposition~\ref{prop:BFH97}.
Thus $\phi$ is an iwip, as claimed.

\end{proof}

\begin{rem}\label{rem:crit}
The assumption that $\phi$ be atoroidal in Proposition~\ref{prop:crit} is essential. One can construct $\phi\in \Out(F_N)$, coming from a pseudo-Anosov homeomorphism of a surface $S$ with $\ge 2$ punctures, such that there is a clean train-track $f:\Gamma\to\Gamma$ representing $\phi$. Then Proposition~\ref{prop:BFH97} still applies, and we do know that no leaf of $\Lambda(f)$ is carried by a finitely generated subgroup of infinite index in $F_N$. However, $\phi$ is not an iwip, since the peripheral curves around punctures in $S$ represent primitive elements in $F_N$ and thus generate cyclic subgroups that are periodic proper free factors of $F_N$.

A specific example of this kind is provided by Bestvina and Handel in Section~6.3 of \cite{BH95} and illustrated in Figure~33 on p. 139 of \cite{BH95}. In this example $S$ is a 5-pinctured sphere, so that $\pi_1(S)=F_4$, and $\phi$ is induced by a pseudo-Anosov homeomorphism of $S$ cyclically permuting the five punctures. The outer automorphism $\phi$ of $F_4=F(a,b,c,d)$ is represented by $\Phi\in \Aut(F_4)$ given by $\Phi(a)=b$, $\Phi(b)=c$, $\Phi(c)=da^{-1}$ and $\Phi(d)=d^{-1}c^{-1}$.   We can represent $\phi$ in the obvious way by a graph-map $f:\Gamma\to \Gamma$ where $\Gamma$ is the wedge of four loop-edges, corresponding to $a$, $b$, $c$, $d$, wedged at a single vertex $v$. Then, as observed in~\cite{BH95} and is easy to verify directly, $f$ is a train-track map with an irreducible transition matrix. A direct check shows that $Wh_\Gamma(v,f)$ is connected and that $A(f^6)>0$. Thus $f$ is a clean train-track representative of $\phi$. However, as noted above, $\phi$ is not an iwip. Thus the element $a\in F(a,b,c,d)$ in this example corresponds to a peripheral curve on $S$ and we see that $\Phi^5(a)=cd a d^{-1}c^{-1}$, so that $\phi^5$ preserves the conjugacy class of a proper free factor $\langle a\rangle$ of $F(a,b,c,d)$.  The fact that $\Phi^5(a)=cd a d^{-1}c^{-1}$ also explicitly demonstrates that $\phi$ is not atoroidal. Note also that in this example $\phi$ is irreducible but it is not an iwip, since $\phi^5$ is reducible. 

\end{rem}

\begin{thm}\label{thm:main}
There exists an algorithm that, given $N\ge 2$ and $\phi\in \Out(F_N)$ decides whether or not $\phi$ is an iwip.
\end{thm}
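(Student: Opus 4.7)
The plan is to split by whether $\phi$ is atoroidal and to combine the train-track criterion of Proposition~\ref{prop:crit} in one branch with a surface-theoretic enumeration in the other. To decide atoroidality, I would run in parallel, as sketched in the introduction, Papasoglu's partial hyperbolicity detection on the mapping torus group $\FN\rtimes_\Phi \Z$ of a lift $\Phi$ of $\phi$, and a systematic search for a pair $(m,[h])$ with $m\ge 1$ and $h\in \FN\setminus\{1\}$ such that $\phi^m([h])=[h]$. The conjugacy problem in $\FN$ is decidable and orbits of conjugacy classes under $\phi^m$ are effectively computable, so this search is a recursive enumeration; Brinkmann's theorem ensures that exactly one of the two halts.

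In the atoroidal case I would invoke Proposition~\ref{prop:crit}. Run the Bestvina--Handel algorithm on $\phi$. Either it outputs a reduction, certifying that $\phi$ is reducible and hence not iwip, or it produces a train-track representative $f:\Gamma\to\Gamma$ of $\phi$ with $A(f)$ irreducible. In the latter case, use the procedure of Remark~\ref{rem:m} to decide whether some $m\ge 1$ satisfies $A(f^m)>0$; if no such $m$ exists, Lemma~\ref{lem:positive}(2) excludes iwipness. If such an $m$ is found, inspect the finitely many Whitehead graphs $Wh_\Gamma(v,f)$; by Proposition~\ref{prop:BH92} applied contrapositively to an irreducible $\phi$ these must all be connected, so $f$ is clean, and Proposition~\ref{prop:crit} certifies $\phi$ as an iwip.

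In the non-atoroidal case, as Remark~\ref{rem:crit} warns, Proposition~\ref{prop:crit} no longer applies, so I would run two recursive enumerations in parallel, exactly one of which is guaranteed to halt. Enumeration~(a) lists pairs $(k,A)$ with $k\ge 1$ and $A$ a proper free factor of $\FN$ specified by a basis, and tests whether $\phi^k([A])=[A]$ by comparing Stallings cores of $A$ and $\phi^k(A)$ up to conjugacy; on success, $\phi$ is not iwip. Enumeration~(b) lists isotopy classes of self-homeomorphisms of compact surfaces $S$ with $\pi_1(S)\cong \FN$ and nonempty boundary, checks with the surface Bestvina--Handel algorithm~\cite{BH95} that each is pseudo-Anosov, and compares the induced outer automorphism with $\phi$; on success, $\phi$ is an iwip. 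Termination of~(a) when $\phi$ is not iwip is immediate from the definition, while termination of~(b) when $\phi$ is iwip rests on the classical fact that any non-atoroidal iwip in $\Out(\FN)$ is induced by a pseudo-Anosov on a compact surface with boundary whose fundamental group is $\FN$.

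The main obstacle I anticipate lies in Case~2, specifically in justifying the geometric classification result that every non-atoroidal iwip in $\Out(\FN)$ is geometric. This is standard but nontrivial; once it is invoked, the pieces above (enumerations of free factors, surfaces, and mapping classes; effective comparison of conjugacy classes of finitely generated subgroups; and the pseudo-Anosov recognition algorithm of~\cite{BH95}) assemble routinely into a decision procedure that outputs the correct answer in every case.
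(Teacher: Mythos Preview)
Your overall architecture matches the paper's: decide atoroidality via Brinkmann/Papasoglu in parallel with a periodic-class search; in the atoroidal branch use Bestvina--Handel plus the clean train-track criterion; in the non-atoroidal branch run competing enumerations for periodic free factors versus a geometric pseudo-Anosov realization.

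There is, however, a genuine error in your Case~2. In enumeration~(b) you range over compact surfaces $S$ with $\pi_1(S)\cong F_N$ and \emph{nonempty} boundary. That is too permissive. If $S$ has two or more boundary components, each boundary curve represents a primitive element of $F_N$, so a pseudo-Anosov $g\in \mathrm{Mod}(S)$ induces a $\phi$ with a periodic conjugacy class of a proper rank-one free factor; such $\phi$ is \emph{not} an iwip (this is precisely the phenomenon in Remark~\ref{rem:crit}). Thus your halting criterion ``(b) succeeds $\Rightarrow$ $\phi$ is iwip'' is false as stated, and in a parallel run (b) could well halt before (a). The paper fixes this by restricting to surfaces with a \emph{single} boundary component and invoking Proposition~4.5 of \cite{BH92}, which says that a non-atoroidal $\phi$ is an iwip if and only if it is induced by a pseudo-Anosov on a compact surface with exactly one boundary component. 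With that restriction the two enumerations become genuinely complementary. A secondary omission in (b): the outer automorphism induced by $g$ depends on the chosen identification $F_N\cong\pi_1(S)$, so you must also enumerate over markings (equivalently, over $\Out(F_N)$-conjugates of the induced element), as the paper does.

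One smaller point in Case~1: after finding $m$ with $A(f^m)>0$ you write that ``by Proposition~\ref{prop:BH92} applied contrapositively to an irreducible $\phi$ these must all be connected.'' But at that stage you have not established that $\phi$ is irreducible, so you cannot conclude connectivity; you must actually \emph{test} the Whitehead graphs and branch. If some $Wh_\Gamma(v,f)$ is disconnected, Proposition~\ref{prop:BH92} gives reducibility and you output ``not iwip''; if all are connected, $f$ is clean and Proposition~\ref{prop:crit} gives ``iwip''. The paper makes this two-sided check explicit.
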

\begin{proof}
We first determine whether $\phi$ is atoroidal, as follows. Let $\Phi\in \Aut(F_N)$ be a representative of $\phi$ and put $G=F_N\rtimes_\Phi \langle t\rangle$ be the mapping torus group of $\Phi$. It is known, by a result of Brinkmann~\cite{Br}, that $\phi$ is atoroidal if and only if $G$ is word-hyperbolic. Thus we start running in parallel the following two procedures. The first is a partial algorithm, due to Papasoglu~\cite{Pa}, detecting hyperbolicity of $G$. The second procedure looks for $\phi$-periodic conjugacy classes of elements of $F_N$.
Eventually exactly one of these two processes will terminate and we will know whether or not $\phi$ is atoroidal.

{\bf Case 1.} Suppose first that $\phi$ turns out to be atoroidal (and hence $N\ge 3$). 

We then run an algorithm of Bestvina-Handel~\cite{BH92} which tries to construct a train-track representative of $\phi$.  As proved in \cite{BH92}, this algorithm always terminates and either produces a train-track representative of $\phi$ with an irreducible transition matrix or finds a reduction for $\phi$, thus showing that $\phi$ is reducible. If the latter happens, we conclude that $\phi$ is not an iwip. Suppose now that the  former happens and we have found a train-track representative $f:\Gamma\to\Gamma$ of $\phi$ with irreducible $A(f)$.
We first check if it is true that for every edge $e$ of $\Gamma$ there exists $t\ge 1$ such that $|f(e)|\ge 2$. If not, we conclude, by Lemma~\ref{lem:positive}, that $\phi$ is not an iwip. If yes, we then check, e.g. using the algorithm from Remark~\ref{rem:m},  if there exists an integer $m\ge 1$ such that $A(f^m)=(A(f))^m>0$.
If no such $m\ge 1$ exists, we conclude, again by Lemma~\ref{lem:positive},  that $\phi$ is not an iwip. Suppose now we have found $m\ge 1$ such that $A(f^m)>0$.  We then check if it is true that every vertex of $\Gamma$ has a connected Whitehead graph $Wh_\Gamma(v,f)$. If not, then we conclude that $\phi$ is not an iwip, by Proposition~\ref{prop:BH92}. If yes,  then $f$ is clean and we conclude that $\phi$ is an iwip, by  Proposition~\ref{prop:crit}. Thus for an atoroidal $\phi$ we can indeed algorithmically determine whether or not $\phi$ is an iwip.

{\bf Case 2.} Suppose now that $\phi$ turned out to be non-atoroidal. Then Proposition~4.5 of \cite{BH92} implies that $\phi$ is an iwip if and only if $\phi$ is induced by a pseudo-Anosov homeomorphism of a compact surface $S$ with a single boundary component. Thus either $\phi$ has a periodic conjugacy class of a proper free factor of $F_N$ or $\phi$ is induced by a pseudo-Anosov  of a compact surface $S$ with a single boundary component.  

We now start running in parallel the following two processes.

The first process looks for a periodic conjugacy class of a proper free factor of $F_N$: we start enumerating all the proper free factors $H_1, H_2,\dots $ of $F_N$ and for each $H_i$ we start listing its images $\phi(H_i), \phi^2(H_i), \phi^3(H_i), \dots $ and check if $\phi^j([H_i])=[H_i]$. The process terminates if we find $i,j$ such that $\phi^j([H_i])=[H_i]$.

The second process looks for the relization of $\phi$ as a a pseudo-Anosov homeomorphism of a compact surface $S$ as above. Note that if $g$ belongs to the mapping class group $Mod(S)$ of $S$ and if $\alpha_1,\alpha_2: F_N\to \pi_1(S)$ are two isomorphisms, then the elements of $\Out(F_N)$ corresponding to $g$ via $\alpha_1$ and $\alpha_2$ are related by a conjugation in $\Out(F_N)$.  Thus, in order to account for all possible realizations of $\phi$ of the above type, do the following. Depending on the rank $N$ of $F_N$, there are either exactly one (non-orientable) or exactly two (one orientable and one non-orientable) topological types of compact connected surfaces $S$ with one boundary component and with $\pi_1(S)$ free of rank $N$. For each of these  choices of $S$ we fix an isomorphism $\alpha:F_N\to \pi_1(S)$. Then start enumerating all the elements $g_1,g_2,\dots$ of $Mod(S)$, and, for each such $g_i$, start enumerating all the $\Out(F_N)$-conjugates $\psi_{ij}$, $j=1,2,\dots$ of the element of $\Out(F_N)$ corresponding to $g_i$ via $\alpha$. Then for each $\psi_{ij}$ check if $\psi_{ij}=\phi$ in $\Out(F_N)$. If not, continue the enumeration of all the the $\psi_{ij}$, and if yes, use the algorithm from~\cite{BH95} to decide whether  or not $g_i$ is pseudo-Anosov (see the paper of Brinkmann~\cite{Br1} for the details about how this Bestvina-Handel algorithm works for compact surfaces with one boundary component). If $g_{i}$ is pseudo-Anosov, we terminate the process; otherwise, we continue the diagonal enumeration of all the $\psi_{ij}$.

Eventually exactly one of these two processes will terminate.  If the first process terminates, we conclude tha $\phi$ is not an iwip. If the second process terminates, we conclude that $\phi$ is an iwip.

\end{proof}

\section{Further developments}

After this paper was written,  the result of  Proposition~\ref{prop:crit} was improved by Dowdall, Kapovich and Leininger~\cite{DKL}.
Let $N\ge 2$, $\phi\in \Out(F_N)$ and let $f:\Gamma\to\Gamma$ be a train-track representative of $\phi$. We say that $f$ is \emph{weakly clean} if  $A(f)$ is irreducible, $f$ is expanding and if for every vertex $v$ of $\Gamma$ the Whitehead graph $Wh_\Gamma(f,v)$ is connected. 

Proposition~5.2 of \cite{DKL} proves:
\begin{prop}\label{prop:DKL}
Let $N\ge 2$, $\phi\in \Out(F_N)$ and let $f:\Gamma\to\Gamma$ be a weakly clean train-track representative of $\phi$. 
Then there exists $k\ge 1$ such that $A(f^k)>0$ (and hence  $f$ is clean).
\end{prop}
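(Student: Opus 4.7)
The plan is to show that the irreducible non-negative integer matrix $A(f)$ is primitive, which by classical Perron--Frobenius theory is equivalent to the existence of some $k\ge 1$ with $A(f^k)>0$. Equivalently, one must show that the period $d$ of $A(f)$ (the gcd of the lengths of directed cycles in the directed graph whose vertices are the topological edges of $\Gamma$ and with an arrow from $e$ to $e'$ whenever $e'^{\pm 1}$ occurs in $f(e)$) equals $1$.

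Since $A(f)$ is irreducible, the above directed graph is strongly connected, and the cyclic decomposition theorem yields a \emph{level function} $\ell\colon E_{top}\Gamma\to \mathbb{Z}/d\mathbb{Z}$, unique up to an additive constant, such that $\ell(e')=\ell(e)+1\pmod d$ whenever the topological edge $e'$ appears in the path $f(e)$. A straightforward induction, using that $f$ is a train-track map so that no cancellation occurs in $f^n$, shows that $\ell(e')=\ell(e)+n\pmod d$ whenever $e'$ occurs in $f^n(e)$.

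The main step is the following combinatorial observation: every taken turn $(e_1,e_2)$ at a vertex $v\in V\Gamma$ satisfies $\ell(e_1)=\ell(e_2)$. Indeed, by definition such a turn is contained in $f^n(e_0)$ for some $e_0\in E\Gamma$ and $n\ge 1$, so there are consecutive edges $\epsilon_i,\epsilon_{i+1}$ of $f^n(e_0)$ with $\{\epsilon_i^{-1},\epsilon_{i+1}\}=\{e_1,e_2\}$. Since both $\epsilon_i$ and $\epsilon_{i+1}$ appear in $f^n(e_0)$, the underlying topological edges of $e_1$ and $e_2$ both have level $\ell(e_0)+n\pmod d$, giving $\ell(e_1)=\ell(e_2)$.

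Now one invokes the hypothesis that $Wh_\Gamma(f,v)$ is connected for every $v\in V\Gamma$. Since every vertex of a train-track representative has degree $\ge 3$, connectedness of $Wh_\Gamma(f,v)$ forces any two oriented edges at $v$ to be joined by a path of taken turns, so all of them share a common level $\alpha(v)$. For any topological edge $e$ with endpoints $v,w$ this yields $\ell(e)=\alpha(v)=\alpha(w)$, so $\alpha$ is constant on the connected graph $\Gamma$ and hence $\ell$ is constant on $E_{top}\Gamma$. Picking any edge $e$ and any edge $e'$ occurring in $f(e)$ (which exists since graph-maps send edges to paths of positive simplicial length, and the hypothesis that $f$ is expanding guarantees non-trivial iterates) then gives $\ell(e)=\ell(e')=\ell(e)+1\pmod d$, forcing $d=1$. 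Therefore $A(f)$ is primitive and $A(f^k)>0$ for some $k\ge 1$. The heart of the argument is the taken-turn/same-level observation; modulo this, the proof is essentially Perron--Frobenius bookkeeping, and no relative train-track machinery is required.
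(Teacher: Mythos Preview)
The paper does not actually prove this proposition; it appears in the ``Further developments'' section and is quoted as Proposition~5.2 of \cite{DKL}, with no argument given here. So there is no proof in the present paper against which to compare your attempt.

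That said, your argument is correct and self-contained. The key observation---that every taken turn joins two edges of equal level modulo the period $d$---is exactly right: if $e_1^{-1}e_2$ occurs as a length-two subpath of $f^n(e_0)$, then the underlying topological edges of $e_1$ and $e_2$ both occur in $f^n(e_0)$ and hence both sit at level $\ell(e_0)+n$. Connectedness of each Whitehead graph then forces all edges at a given vertex to share a level, and connectedness of $\Gamma$ makes $\ell$ globally constant, whence $d=1$ and $A(f)$ is primitive.

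Two minor cosmetic remarks. First, the parenthetical ``the hypothesis that $f$ is expanding guarantees non-trivial iterates'' is unnecessary: by the definition of graph-map in this paper, $f(e)$ is already a tight path of positive simplicial length, so some $e'$ in $f(e)$ always exists. (The expanding hypothesis is part of ``weakly clean'' but is not used in your argument; irreducibility of $A(f)$ plus the graph-map convention suffice.) Second, the degree $\ge 3$ condition is irrelevant to your step~4: connectedness of $Wh_\Gamma(f,v)$ is by definition the statement that any two oriented edges at $v$ are joined by a chain of taken turns, regardless of degree.
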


Therefore, by  Proposition~\ref{prop:crit}, if $\phi\in \Out(F_N)$ is atoroidal and if  $f:\Gamma\to \Gamma$ is a weakly clean train-track representative of $\phi$ then $\phi$ is an iwip.
Moreover, as observed in \cite{DKL}, Proposition~\ref{prop:DKL} can be used to show that for atoroidal elements of $\Out(F_N)$ being irreducible is equivalent to being an iwip.

Thus Proposition~5.6 of \cite{DKL} proves:
\begin{cor}\label{cor:DKL}
Let $N\ge 3$ and let $\phi\in \Out(F_N)$ be an atoroidal element. 

Then $\phi$ is irreducible if and only if $\phi$ is an iwip.  
\end{cor}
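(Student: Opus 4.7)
The forward direction is immediate from the definition: if $\phi$ is an iwip then every power $\phi^m$ is irreducible, so in particular $\phi$ itself is irreducible. The plan for the reverse direction is to produce a clean train-track representative of $\phi$ and then invoke Proposition~\ref{prop:crit}, specifically the implication $(3)\Rightarrow (1)$.

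Assume $\phi$ is atoroidal and irreducible. First, by the Bestvina-Handel theorem quoted just before Proposition~\ref{prop:BH92}, there is a train-track representative $f:\Gamma\to\Gamma$ of $\phi$ with $A(f)$ irreducible. I next want to show $f$ is expanding. The Perron-Frobenius eigenvalue $\lambda$ of the non-negative integer irreducible matrix $A(f)$ satisfies $\lambda\ge 1$, with equality if and only if $A(f)$ is a permutation matrix. In that exceptional case, $f$ would send each topological edge homeomorphically onto another and hence act as a graph automorphism of $\Gamma$; a suitable power of $f$ would then be the identity on $\Gamma$, forcing $\phi$ to have finite order in $\Out(F_N)$ and thus preserve every conjugacy class, contradicting atoroidality. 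So $\lambda>1$ and $|f^n(e)|\to\infty$ for every edge $e$; that is, $f$ is expanding.

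Since $\phi$ is irreducible and $f$ is expanding, the contrapositive of Proposition~\ref{prop:BH92} forces $Wh_\Gamma(v,f)$ to be connected at every $v\in V\Gamma$. Combined with the irreducibility of $A(f)$ and the expanding property, this says exactly that $f$ is weakly clean in the sense of the definition preceding Proposition~\ref{prop:DKL}. That proposition then upgrades weak cleanliness to cleanliness, producing $k\ge 1$ with $A(f^k)>0$. With a clean train-track representative of an atoroidal $\phi$ in hand, Proposition~\ref{prop:crit} concludes that $\phi$ is an iwip.

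The main obstacle is the expanding step: the chain Bestvina-Handel $\to$ Proposition~\ref{prop:BH92} $\to$ Proposition~\ref{prop:DKL} $\to$ Proposition~\ref{prop:crit} is essentially automatic once each hypothesis is checked, but the version of Bestvina-Handel cited in the excerpt only guarantees that $A(f)$ is irreducible, not that $f$ is expanding. The little Perron-Frobenius/permutation-matrix argument above is the one place where atoroidality must be used in building the representative, rather than merely in the final application of Proposition~\ref{prop:crit}.
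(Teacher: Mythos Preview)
Your proof is correct and follows the same route as the paper's: Bestvina--Handel gives a train-track representative with irreducible $A(f)$; atoroidality rules out the permutation-matrix case so $f$ is expanding; Proposition~\ref{prop:BH92} gives connected Whitehead graphs; Proposition~\ref{prop:DKL} upgrades to clean; Proposition~\ref{prop:crit} finishes. The only difference is cosmetic: the paper compresses your Perron--Frobenius/finite-order argument into the single line ``since $\phi$ is atoroidal, $A(f)$ is not a permutation matrix, and therefore $f$ is expanding,'' which is exactly what your paragraph unpacks.
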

\begin{proof}
Clearly, if $\phi$ is an iwip then $\phi$ is irreducible.

Thus let $\phi\in \Out(F_N)$ be an atoroidal irreducible element.  Then, by a result of Bestvina-Handel, there exists a train-track representative $f:\Gamma\to\Gamma$ of $\phi$ such that  the transition matrix of $A(f)$ is irreducible. Since $\phi$ is atoroidal, it follows that $A(f)$ is not a permutation matrix, and therefore the train-track map $f$ is expanding. Since by assumption $\phi$ is irreducible,  Proposition~\ref{prop:BH92} implies that for every vertex $v$ of $\Gamma$ the Whitehead graph $Wh_\Gamma(f,v)$ is connected. Hence, by Proposition~\ref{prop:DKL}, $f$ is clean. Therefore, by Proposition~\ref{prop:crit}, $\phi$ is an iwip, as required.

\end{proof}

\end{document}